 \newtheorem{thm}{Theorem}[section]
 \newtheorem{lem}[thm]{Lemma}
 \theoremstyle{definition}
 \theoremstyle{remark}
 \newtheorem{rem}[thm]{Remark}
 \newtheorem{ex}[thm]{Example}
 \numberwithin{equation}{section}
\begin{document}

%
%
%
%
%
%
%
%
%

\author[Youssef Aserrar, Abdellatif Chahbi and Elhoucien Elqorachi]{Youssef Aserrar, Abdellatif Chahbi and Elhoucien Elqorachi}

\address{%
	Ibn Zohr University, Faculty of sciences, 
Department of mathematics,\\
Agadir,
Morocco}

\email{youssefaserrar05@gmail.com, abdellatifchahbi@gmail.com,\\ elqorachi@hotmail.com }

\subjclass{39B52, 39B32}

\keywords{Semigroup, Wilson's equation, automorphism, multiplicative function.}

\date{January 1, 2020}
\title[A variant of Wilson's functional equation on semigroups]{A variant of Wilson's functional equation on  semigroups}
 
\begin{abstract}
We determine the complex-valued solutions of the following functional equation 
\[f(xy)+\mu (y)f(\sigma (y)x) = 2f(x)g(y),\quad x,y\in S,\]  
where $S$ is a semigroup  and 
 $\sigma$ an automorphism, $\mu :S\rightarrow \mathbb{C}$ is a multiplicative function  such that
$\mu (x\sigma (x))=1$ for all $x\in S$. 
\end{abstract}

\maketitle

\section{Introduction}
Stetkaer \cite{ST2} solved the variant of d'Alembert's functional equation
\[f(xy)+f(\sigma (y)x)=2f(x)f(y),\quad x,y\in S,\]
where $S$ is a semigroup, and $\sigma : S\rightarrow S$ is an involutive automorphism, that is $\sigma (xy)=\sigma (x)\sigma (y)$ and $\sigma (\sigma (x))=x$, for all $x,y\in S$. The solutions are abelian and are of the form $f=\dfrac{\chi +\chi \circ \sigma}{2}$, where $\chi : S\rightarrow \mathbb{C}$ is a multiplicative function of $S$.\newline
In \cite{EA}  Elqorachi and Redouani determined the complex-valued solutions of the variant of Wilson's functional equation
\begin{equation}
f(x y)+\mu(y) f(\sigma(y) x)=2f(x)g(y), \quad x, y \in G,
\label{A2}
\end{equation}
where $G$ is a group and $\mu : S\rightarrow \mathbb{C}$ is a multiplicative function such that $\mu (x\sigma (x))=1$ for all $x\in G$.  Fadli et al. \cite{F1} obtained the solutions of \eqref{A2} with $\mu =1$ on groups. In \cite{SA1} Sabour determined the solutions of \eqref{A2} on groups with $\mu=1$ and $\sigma$ an automorphism not necessarily involutive.\newline
In a recent paper \cite{Ajb} Ajebbar and Elqorachi solved \eqref{A2} on semigroups generated by their squares. \\
Recently, Ebanks \cite{EB2} obtain the solution of the partially Pexiderized d'Alembert-
type equation $f (x\sigma(y)) + h(\tau (y)x) = 2f (x)k(y)$ for $f, g, h, k : M \rightarrow \mathbb{C}$,
where $\sigma, \tau$ are involutive automorphisms on monoids  that are neither regular nor generated by their squares. There are some results about solutions of equation \eqref{A2} with $\mu=1$ on abelian groups in the literature. See Aczél, Dhombres \cite{Acz}, Stetkaer \cite{ST} for further contextual and historical discussion.\newline
Our attention was drawn to \eqref{A2} because in its solutions on groups and semigroups the sine addition law 
\begin{equation} 
f(xy)= f(x)g(y)+f(y)g(x),\quad x,y\in S,
\label{moneq}
\end{equation}
 plays an important role, and in the recent papers \cite{EB1,EB2} by Ebanks the solutions of \eqref{moneq} are described in a general semigroup.\par
 The contributions of the present paper to the knowledge about solutions of
 \eqref{A2} are the following :\\
 1) The setting has $S$ to be a semigroup, not necessarily generated by its squares.\\2) The automorphism  
  $\sigma : S\rightarrow S$ is not necessary involutive. \\3) We relate the solutions of Wilson's functional equation  \eqref{A2} to the sine addition law \eqref{moneq}, and we find explicit formulas for the solutions, expressing them in terms of multiplicative, additive functions, and other arbitrary functions.
\section{Notations and Terminology}
We impose as blanket assumptions that ($S, .$) is a semigroup. We say that the semigroup $S$ is a topological semigroup, if $S$ is equipped
with a topology such that the product map $(x,y)\mapsto xy$ from $S \times S$ to $S$ is
continuous, when $S \times S$ is given the product topology.\par
We say that a function $f$ on $S$
 is additive if $f(xy) = f(x) + f(y)$ for all $x, y\in S$; 
$f$ is multiplicative if $f(xy) = f(x)f(y)$ for all $x, y \in S$, 
$f$ is central if $f(xy) = f(yx)$ for all $x, y\in S$, and $f$ is abelian if $f$ is central  and $f (xyz) =
f (xzy)$ for all $x, y, z \in S.$

If $S$ is a semigroup, $\sigma: S \rightarrow S$ an automorphism and $\mu: S \rightarrow \mathbb{C}$ a multiplicative function such that $\mu(x \sigma(x))=1$ for all $x \in S$.
For any subset $T\subseteq S$ we define $T^2:=\lbrace xy\quad \vert \quad x, y\in T\rbrace$.
If $\chi: S \rightarrow \mathbb{C}$ is a multiplicative function and $\chi \neq 0$, then $I_{\chi}:=\{x \in S \mid \chi(x)=0\}$ is either empty or a proper subset of $S$.  $I_{\chi}$ is a two sided ideal in $S$ if not empty and $S \backslash I_{\chi}$ is a subsemigroup of $S$. Notice that $I_{\chi}$ is also a subsemigroup of $S$. We define as in \cite{EB2} the set $P_\chi : =\{p\in I_{\chi}\backslash I_{\chi}^2\quad \vert up, pv, upv\in I_{\chi}\backslash I_{\chi}^2\quad \text{for all}\quad u,v\in S\backslash I_{\chi}\}$.\newline
 For any function $f: S \rightarrow \mathbb{C}$ we note $f^{*}(x):=\mu(x) f(\sigma(x)),\quad \text{for all}\quad x \in S$, and the functions $f^{e}:=\frac{f+f^{*}}{2}$, $f^{\circ}:=\frac{f-f^{*}}{2}$.\par
 For a topological semigroup $S$ let $C(S)$ denote the algebra of continuous functions mapping $S$ into $\mathbb{C}$.
\section{Main result}

In the following lemma we give some key properties of  solutions of Equation \eqref{A2}.
\begin{lem}
\label{Lemma}
Let $f, g : S\rightarrow \mathbb{C}$ be a solution of Equation \eqref{A2}. The follwoing statements hold:
\begin{enumerate}
\item[(1)] $f_a(xy)=f_a(x)g(y)+f_a(y)g(x)$, for all $a,x,y \in S$, where $f_a(x)=f(ax)-f(a)g(x)$ for all $x\in S$, and hence $f_a$ and $g$ are abelian, in particular central.
\item[(2)] $f^{\circ}(xy)=f(x)g(y)-f^*(y)g(x)$, for all $x,y \in S$.
\item[(3)] $f(xy)=2f(x)g(y)+2f(y)g(x)-4f^e(y)g(x)+f^*(xy)$, for all $x,y \in S$.
\item[(4)] If $f$ is central and $g\neq 0$ then $f^e$ and $g$ are linearly dependent.
\item[(5)] If $f^e$ and $g$ are linearly independent and $g\neq 0$, then there exists two functions $h_1, h_2 : S\rightarrow \mathbb{C}$ such that 
\[g(xy)=f(x)h_1(y)+g(x)h_2(y),\quad\text{for all}\quad x,y\in S.\]
\item[(6)] If $g$ is a non-zero multiplicative function then there exists a function $h : S\rightarrow \mathbb{C}$ such that 
\[f(xy)=f(x)g(y)+g(x)h(y),\quad\text{for all}\quad x,y\in S.\]
\end{enumerate}

\end{lem}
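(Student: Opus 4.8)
The plan is to produce the function $h$ explicitly, exploiting the already-established formula for $f^{\circ}$ in statement (2) together with the associativity of $S$ and, decisively, the multiplicativity of $g$. Writing the target identity as $f(xy)=f(x)g(y)+g(x)h(y)$, the right-hand side is linear in $g(x)$, so I expect $h$ to emerge once I isolate the coefficient of $g(x)$ in a suitable three-variable relation.

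First I would expand $f^{\circ}(xyz)$ in two ways using statement (2), namely $f^{\circ}(uv)=f(u)g(v)-f^{*}(v)g(u)$. Grouping the triple product as $(xy)z$ gives $f^{\circ}(xyz)=f(xy)g(z)-f^{*}(z)g(xy)$, while grouping it as $x(yz)$ gives $f^{\circ}(xyz)=f(x)g(yz)-f^{*}(yz)g(x)$. This is the one place where the hypothesis that $g$ is multiplicative enters: I replace $g(xy)$ by $g(x)g(y)$ and $g(yz)$ by $g(y)g(z)$. Equating the two expansions and rearranging then yields the key identity
\[
f(xy)\,g(z)=f(x)g(y)g(z)+g(x)\bigl[f^{*}(z)g(y)-f^{*}(yz)\bigr],\qquad x,y,z\in S.
\]

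To finish, I would use that $g$ is non-zero to fix an element $z_{0}\in S$ with $g(z_{0})\neq 0$, specialize the identity at $z=z_{0}$, and divide by $g(z_{0})$. This gives $f(xy)=f(x)g(y)+g(x)h(y)$ with the explicit choice
\[
h(y):=\frac{f^{*}(z_{0})g(y)-f^{*}(yz_{0})}{g(z_{0})},\qquad y\in S,
\]
which is a well-defined function on $S$, manifestly independent of $x$, as required.

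The step I expect to be the main obstacle is the very first one: recognizing that the two associative groupings of the triple product, after the multiplicative collapse $g(xy)=g(x)g(y)$, separate the variables into exactly the shape $g(x)\times(\text{function of }y)$. It is worth noting an alternative route through statement (1): the relation $f_{a}(xy)=f_{a}(x)g(y)+f_{a}(y)g(x)$ combined with associativity quickly yields the ``right-factoring'' identity $f_{ax}=g(x)f_{a}$, but because $S$ need not contain an identity element one cannot simply set the left factor equal to a neutral element to conclude $f_{a}=g(a)h$. Promoting that right-factoring to genuine proportionality is precisely what the multiplicativity of $g$ must buy us, and the $f^{\circ}$-computation above sidesteps this difficulty altogether.
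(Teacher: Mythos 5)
Your proposal addresses only item (6); items (1)--(5) of the lemma are not proved, and item (2), on which your entire computation rests, is taken for granted. For item (6) itself your argument is correct and is essentially identical to the paper's: the paper likewise expands $f^{\circ}(xyz)$ in the two associative groupings using item (2), obtaining
\[
f(xy)g(z)-f^{*}(z)g(xy)=f(x)g(yz)-f^{*}(yz)g(x),
\]
then invokes multiplicativity of $g$ to write $g(xy)=g(x)g(y)$ and $g(yz)=g(y)g(z)$, and finally fixes $z_{0}$ with $g(z_{0})\neq 0$ to solve for $f(xy)$; the function produced by that specialization is exactly your $h(y)=\bigl(f^{*}(z_{0})g(y)-f^{*}(yz_{0})\bigr)/g(z_{0})$. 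Your side remark that the route through item (1) only yields $f_{ax}=g(x)f_{a}$, which cannot be promoted to $f_{x}=g(x)h$ for all $x$ on a semigroup without identity, is also correct, and is indeed why the $f^{\circ}$ route is the one to take.

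The genuine gap is therefore completeness, not correctness. As a proof of the stated lemma you still owe: item (2), which the paper obtains by applying \eqref{A2} to the pair $(\sigma(y),x)$, multiplying by $-\mu(y)$ (multiplicativity of $\mu$ and the homomorphism property of $\sigma$ turn $\mu(y)\mu(x)f(\sigma(x)\sigma(y))$ into $f^{*}(xy)$), and adding the result to \eqref{A2} to get $f(xy)-f^{*}(xy)=2f(x)g(y)-2f^{*}(y)g(x)$; item (1), the sine addition law for $f_{a}$, from which the abelianness and centrality claims follow via \cite[Theorem 3.1]{EB2}; and items (3)--(5), of which (5) is the least routine, since one must first rule out linear dependence of $f^{*}$ and $g$ (otherwise $f^{*}$, hence $f$, is central and item (4) forces $f^{e}$ and $g$ to be dependent, a contradiction) before solving a two-by-two system in $z$ to extract $h_{1}$ and $h_{2}$. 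None of these steps is long, but none of them is present in your text.
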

\begin{proof}
(1) Let $a,x,y \in S$ be arbitrary. We use similar computations to those of \cite{ST2}.\newline
We apply equation \eqref{A2} to the pair $(ax,y)$, we obtain 
\[f(axy)+\mu (y)f(\sigma(y)ax)=2f(ax)g(y).\]
Now if we apply \eqref{A2} to the pair $(\sigma (y)a,x)$ and multiply the identity obtained by $-\mu (y)$ we get
\[-\mu (y)f(\sigma (y)ax)-\mu (xy)f(\sigma (x)\sigma (y)a)=-2\mu (y) f(\sigma (y)a)g(x).\]
For the pair $(a,xy)$ Equation \eqref{A2}  becomes
\[f(axy)+\mu (xy)f(\sigma(x)\sigma (y)a)=2f(a)g(xy).\]
By adding these three identities we obtain 
\[f(axy)=f(a)g(xy)+f(ax)g(y)+g(x)[f(ay)-2f(a)g(y)].\]
Since $a,x,y$ are arbitrary we deduce that for all $a\in S$ the pair $(f_a,g)$ satisfies the sine addition law 
\begin{equation}
f_a(xy)=f_a(x)g(y)+f_a(y)g(x), \quad x,y \in S.
\label{B1}
\end{equation}
Equation \eqref{B1} and \cite[Theorem 3.1]{EB2} implies that $f_a$ and $g$ are abelian, in particular central. This is case (1).\\
(2) By applying \eqref{A2} to the pair $(\sigma (y),x)$ and multiplying the identity obtained by $-\mu (y)$ we get 
\begin{equation}
-\mu (y)f(\sigma (y)x)-f^*(xy)=-2f^*(y)g(x).
\label{B2}
\end{equation}
By adding \eqref{B2} to \eqref{A2} we obtain 
\begin{equation}
f(xy)-f^*(xy)=2f(x)g(y)-2f^*(y)g(x).
\label{B3}
\end{equation}
This implies that 
\[f^{\circ}(xy)=f(x)g(y)-f^*(y)g(x).\]
This is the result (2) of Lemma \ref{Lemma}.\\
(3) The identity \eqref{B3} implies that
\[f(xy)=2f(x)g(y)-2f^*(y)g(x)+f^*(xy).\]
Since $f^*=2f^e-f$ we get 
\[f(xy)=2f(x)g(y)-2(2f^e(y)-f(y))g(x)+f^*(xy).\]
So
\begin{equation}
f(xy)=2f(x)g(y)+2f(y)g(x)-4f^e(y)g(x)+f^*(xy).
\label{B4}
\end{equation} 
This occurs in part (3).\\
(4) If $f$ is central then $f^*$ is also central, so taking this into account in the identity \eqref{B4} we deduce that 
\[-4f^e(y)g(x)=-4f^e(x)g(y),\quad\text{for all}\quad x,y\in S.\]
Since $g\neq 0$ then there exists $x_0 \in S$ such that $g(x_0)\neq 0$. By replacing $x$ by $x_0$ and putting $c=\dfrac{f^e(x_0)}{g(x_0)}$ in the identity above we get 
\[f^e(y)=cg(y),\quad\text{for all}\quad y\in S.\]
This is part (4).\\
(5) Suppose that $f^e$ and $g$ are linearly independent and $g\neq 0$. Using the associativity of the semigroup operation, we can compute $f^{\circ}(xyz)$ first as $f^{\circ}(x(yz))$ and then as $f^{\circ}((xy)z)$ using the identity in (2) and compare the results. We obtain
\begin{equation}
f(xy)g(z)-f^*(z)g(xy)=f(x)g(yz)-f^*(yz)g(x).
\label{B5}
\end{equation}
Since $f=f^e+f^{\circ}$ and $f^*=f^e-f^{\circ}$ then by using the identity in (2) we get
\[f(xy)=f^e(xy)+f(x)g(y)-f^*(y)g(x)\] and 
\[f^*(yz)=f^e(yz)-f(y)g(z)+f^*(z)g(y).\]
Substituting the last two identities in \eqref{B5} we get after some rearrangement
\begin{align*}
g(z)\left[ g(x)f^e(y)-f^e(xy)\right] +f^*(z)\left[ g(xy)-g(x)g(y)\right]= \\f(x)\left[ g(y)g(z)-g(yz)\right] +g(x)\left[ f^e(yz)-f^e(y)g(z)\right].
\end{align*}
If $f^*$ and $g$ are linearly dependent then $f^*$ is central, since $\sigma$ is an automorphism we deduce that $f$ is also central, according to (4) this implies that $f^e$ and $g$ are linearly dependent, this is a contradiction. So $f^*$ and $g$ are linearly independent.\newline
By fixing $z=z_1$ and $z=z_2$ such that $g(z_1)f^*(z_2)-g(z_2)f^*(z_1)\neq 0$ in the identity above we obtain two equations from which we get 
\begin{equation}
g(xy)=f(x)h_1(y)+g(x)h_2(y),
\end{equation}
for some functions $h_1$ and $h_2$. This occurs in (5).\\
(6) If $g$ is a non-zero multiplicative function then \eqref{B5} becomes
\[f(xy)g(z)-f^*(z)g(x)g(y)=f(x)g(y)g(z)-f^*(yz)g(x).\]
This implies that 
\begin{equation}
g(z)\left( f(xy)-f(x)g(y)\right)=g(x)\left(f^*(z)g(y)-f^*(yz) \right).  
\label{B6}
\end{equation}
By fixing $z=z_0$ such that $g(z_0)\neq 0$ in \eqref{B6} we deduce that 
\begin{equation}
f(xy)=f(x)g(y)+g(x)h(y),
\end{equation}
for some function $h$. This is part (6). This completes the proof of Lemma \ref{Lemma}.
\end{proof}
In the following lemma we give some properties of the subsets $P_{\chi}$ and $I_{\chi}\backslash P_{\chi}$ when $\chi$ is $\sigma$-invariant (See \cite[Lemma 4.1]{EB1}).
\begin{lem}
Let $\chi :S\rightarrow \mathbb{C}$ be a non-zero multiplicative function such that $\chi\circ\sigma=\chi$, and $\sigma :S\rightarrow S$ an automorphism. Then
\begin{enumerate}
\item[(1)] $\sigma (P_{\chi})\subset P_{\chi}$.
\item[(2)] $\sigma (I_{\chi}\backslash P_{\chi})\subset I_{\chi}\backslash P_{\chi}$.
\end{enumerate} 
\label{le}
\end{lem}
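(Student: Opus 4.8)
The plan is to unpack the definitions of $P_\chi$ and $I_\chi\setminus P_\chi$ and then push elements through $\sigma$, using systematically that $\sigma$ is an automorphism and that $\chi$ is $\sigma$-invariant. The key structural facts I would record first are these: since $\chi\circ\sigma=\chi$, the set $I_\chi=\{x\in S\mid\chi(x)=0\}$ is $\sigma$-invariant, i.e. $\sigma(I_\chi)=I_\chi$ (indeed $x\in I_\chi\iff\chi(\sigma(x))=\chi(x)=0\iff\sigma(x)\in I_\chi$), and likewise $\sigma(S\setminus I_\chi)=S\setminus I_\chi$. Because $\sigma$ is an automorphism it is a bijection respecting products, so $\sigma(I_\chi^2)=\sigma(I_\chi)^2=I_\chi^2$, and therefore $\sigma$ maps $I_\chi\setminus I_\chi^2$ bijectively onto itself. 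These observations handle the ``outer layer'' of both sets for free.

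\textbf{Part (1).} Let $p\in P_\chi$. By the remarks above, $\sigma(p)\in I_\chi\setminus I_\chi^2$, so it remains only to verify the three membership conditions defining $P_\chi$ for $\sigma(p)$: namely that $u\sigma(p)$, $\sigma(p)v$, and $u\sigma(p)v$ all lie in $I_\chi\setminus I_\chi^2$ for arbitrary $u,v\in S\setminus I_\chi$. The idea is to pull each such product back through $\sigma^{-1}$. Fix $u,v\in S\setminus I_\chi$ and set $u'=\sigma^{-1}(u)$, $v'=\sigma^{-1}(v)$; since $S\setminus I_\chi$ is $\sigma$-invariant (hence $\sigma^{-1}$-invariant), we have $u',v'\in S\setminus I_\chi$. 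Then $u\sigma(p)=\sigma(u')\sigma(p)=\sigma(u'p)$, and $u'p\in I_\chi\setminus I_\chi^2$ by the defining property of $p\in P_\chi$; applying the bijection $\sigma$ of $I_\chi\setminus I_\chi^2$ gives $u\sigma(p)\in I_\chi\setminus I_\chi^2$. The same computation with $\sigma(p)v=\sigma(pv')$ and $u\sigma(p)v=\sigma(u'pv')$ disposes of the remaining two conditions. Hence $\sigma(p)\in P_\chi$.

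\textbf{Part (2).} Let $x\in I_\chi\setminus P_\chi$. Since $\sigma(I_\chi)=I_\chi$ we already know $\sigma(x)\in I_\chi$, so the only thing to show is $\sigma(x)\notin P_\chi$. I would argue by contradiction: if $\sigma(x)\in P_\chi$, then applying Part (1) to the automorphism $\sigma^{-1}$ — which also commutes with $\chi$, since $\chi\circ\sigma^{-1}=(\chi\circ\sigma)\circ\sigma^{-1}=\chi$ — yields $\sigma^{-1}(\sigma(x))=x\in P_\chi$, contradicting $x\in I_\chi\setminus P_\chi$. Thus $\sigma(x)\in I_\chi\setminus P_\chi$.

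The computations here are entirely routine; the only point requiring care is the conjugation trick of replacing $u,v$ by their $\sigma$-preimages so as to convert products of the form $u\sigma(p)$ into $\sigma$ of a product, and then invoking the $\sigma$-invariance of $S\setminus I_\chi$ and of $I_\chi\setminus I_\chi^2$. The main (mild) obstacle is simply keeping the $\sigma$-invariance of the three relevant sets — $I_\chi$, $S\setminus I_\chi$, and $I_\chi\setminus I_\chi^2$ — cleanly established at the outset, since everything else follows formally from $\sigma$ being a product-preserving bijection. One should also confirm that $\sigma$ being merely an automorphism (not necessarily involutive) causes no difficulty: it does not, because $\sigma^{-1}$ is again a $\chi$-invariant automorphism, which is exactly what Part (2) exploits.
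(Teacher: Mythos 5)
Your proof is correct, and it rests on the same basic mechanism as the paper's — using bijectivity of $\sigma$ together with $\chi\circ\sigma=\chi$ to pull elements of $S\setminus I_\chi$ and products in $I_\chi^2$ back through $\sigma$ — but the organization is genuinely different in two respects. For part (1) the paper argues by contradiction: it assumes $\sigma(x)\notin P_\chi$, extracts a single witness $y\in S\setminus I_\chi$ with $\sigma(x)y\in I_\chi^2$, and pulls it back; strictly speaking this treats only one of the possible failure modes (the others — $\sigma(x)\in I_\chi^2$, $u\sigma(x)\in I_\chi^2$, $u\sigma(x)v\in I_\chi^2$ — are analogous but left tacit). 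Your direct verification, resting on the up-front observations $\sigma(I_\chi)=I_\chi$, $\sigma(S\setminus I_\chi)=S\setminus I_\chi$ and $\sigma(I_\chi^2)=\sigma(I_\chi)^2=I_\chi^2$, handles all the defining conditions of $P_\chi$ symmetrically and is therefore more complete than what is printed. For part (2) the difference is sharper: the paper reproves the statement from scratch with a second pull-back-and-contradiction argument (again verifying only the condition $xz\in I_\chi\setminus I_\chi^2$ explicitly), whereas you simply apply part (1) to the automorphism $\sigma^{-1}$, noting that $\chi\circ\sigma^{-1}=\chi$ and that $P_\chi$ depends only on $\chi$, not on $\sigma$. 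This reduction avoids duplicating the computation and makes transparent exactly where bijectivity (as opposed to mere surjectivity) of $\sigma$ is used; the paper's version, by contrast, keeps each part self-contained at the cost of repetition and of leaving some membership conditions unchecked on the page.
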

\begin{proof}
(1) Let $x\in P_{\chi}$, suppose that $\sigma(x)\notin P_{\chi}$, then there exists $y\in S\backslash I_{\chi}$ such that $\sigma(x)y\in I_{\chi}^2$. Since $\sigma$ is an automorphism and $\chi=\chi\circ\sigma$, then there exists $z\in S\backslash I_{\chi}$ such that $y=\sigma(z)$, so $\sigma(x)y=\sigma(x)\sigma(z)=\sigma(a)\sigma(b)$, for some $a,b\in I_{\chi}$. Then $xz=ab\in I_{\chi}^2$, which implies that $x\notin P_{\chi}$ but this is a contradiction, and then $\sigma(x)\in P_{\chi}$.\\
(2) For all $x\in I_{\chi}\backslash P_{\chi}$, $\sigma (x)\in I_{\chi}$ since $\chi=\chi\circ\sigma$. Suppose that $\sigma(x)\in P_{\chi}$, then for all $y\in S\backslash I_{\chi}$ we have $\sigma(x)y\in I_{\chi}\backslash I_{\chi}^2$. Since $\sigma$ is an automorphism then there exists $z\in S\backslash I_{\chi}$ such that $y=\sigma(z)$, so $\sigma(x)y=\sigma(xz)\in I_{\chi}\backslash I_{\chi}^2$, then since $\chi=\chi\circ\sigma$ we get that $xz\in I_{\chi}\backslash I_{\chi}^2$, since $y$ is arbitrary then $z$ is also arbitrary. This implies that $x\in P_{\chi}$, this is a contradiction, and then $\sigma(x)\in I_{\chi}\backslash P_{\chi}$. This completes the proof of Lemma \ref{le}.
\end{proof}
Now we are ready  to solve the functional equation \eqref{A2}.
\begin{thm}
\label{Th}
The solutions $f, g :S \rightarrow \mathbb{C}$ of the functional equation \eqref{A2} with $g\neq 0$ are the following pairs:
\begin{enumerate}
\item[(1)] $f=0$ and $g\neq 0$  arbitrary.
\item[(2)] $f=\alpha \chi +\beta \chi ^*$ and $g=\dfrac{\chi +\chi ^*}{2}$ where  $\chi: S \rightarrow \mathbb{C}$ is a non-zero multiplicative function  and $\alpha ,\beta \in \mathbb{C}$ are constants such that $\left(\alpha ,\beta \right)\neq (0,0) $ and $\chi^* \neq \chi$, in addition if  $\beta \neq 0$ then $\chi\circ \sigma^2=\chi$.
\item[(3)] $$f=\left\{ \begin{matrix}
   \chi (c+A) & on & S\backslash {{I}_{\chi }}  \\
   0 & on & {{I}_{\chi }}\backslash {{P}_{\chi }}  \\
   \rho  & on & {{P}_{\chi }}  \\
\end{matrix} \right.\quad \text{and}\quad g=\chi,$$ 
\end{enumerate}
where $c \in \mathbb{C}$ is a constant, $\chi: S \rightarrow \mathbb{C}$ is a non-zero multiplicative function and $A: S \backslash I_{\chi} \rightarrow \mathbb{C}$ is an additive function such that $\chi^{*}=\chi$ and $A \circ \sigma=-A$, $\rho: P_{\chi} \rightarrow \mathbb{C}$ is the restriction of $f$ to $P_\chi$ such that at least one of  $c, A$ and $\rho$ is not zero and $\rho ^*=-\rho$. In addition we have the following conditions:\\
(I) : If $x\in\left\lbrace up,pv,upv \right\rbrace $ for $p\in P_{\chi}$ and $u,v\in S\backslash I_{\chi}$, then $x\in P_{\chi}$ and we have respectively $\rho(x)=\rho(p)\chi (u)$, $\rho(x)=\rho(p)\chi (v)$, or $\rho(x)=\rho(p)\chi (uv)$.\\
(II) : $f(xy)=f(yx)=0$ for all $x\in S\backslash I_{\chi}$ and $y\in I_{\chi}\backslash P_{\chi}$.\\
Note that, off the exceptional case (1) $f$ and $g$ are abelian.\par
Furthermore, off the exceptional case (1) if $S$ is a topological semigroup and $f\in C(S)$, then $g,\chi,\chi^*\in C(S)$,  $A\in C(S\backslash I_{\chi})$ and $\rho \in C(P_{\chi})$.
\end{thm}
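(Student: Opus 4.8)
The plan is to dispose first of the trivial possibility $f=0$, which is exactly alternative (1), and then to assume $f\neq 0$ throughout. The engine of the proof is Lemma~\ref{Lemma}: part (1) says that for every $a\in S$ the pair $(f_a,g)$ obeys the sine addition law \eqref{moneq} and that $f_a,g$ are abelian, while parts (2)--(6) tie together $f$, $f^{e}$, $f^{\circ}$ and $g$. I would organise everything around a single dichotomy, namely whether $g$ is multiplicative or not. This is precisely what separates alternative (3), where $g=\chi$, from alternative (2), where $g=\frac{\chi+\chi^{*}}{2}$ is a genuine average of two distinct multiplicative functions and hence not multiplicative. The point of reducing to \eqref{moneq} is that its solutions on a general semigroup are already described by Ebanks in \cite{EB1,EB2}, so the task becomes one of feeding the $*$-symmetry of the present problem into that classification.

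Suppose first that $g$ is \emph{not} multiplicative. The initial task is to rule out the linearly independent branch of Lemma~\ref{Lemma}(5): if $f^{e}$ and $g$ were independent, the relation $g(xy)=f(x)h_1(y)+g(x)h_2(y)$ together with the centrality of $g$ and the sine addition law should force $g$ to be multiplicative, contradicting our standing assumption. Hence $f^{e}$ and $g$ are dependent and, since $g\neq 0$, we may write $f^{e}=cg$. Substituting $f=cg+f^{\circ}$ and $f^{*}=cg-f^{\circ}$ into Lemma~\ref{Lemma}(2) cancels the cross terms and shows that $f^{\circ}$ and $g$ themselves satisfy \eqref{moneq}. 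Because $g$ is not multiplicative, Ebanks' classification \cite{EB1} yields distinct multiplicative functions with $g=\frac{\chi+\chi^{*}}{2}$ and $f^{\circ}$ a scalar multiple of $\chi-\chi^{*}$; assembling $f=f^{e}+f^{\circ}$ then gives $f=\alpha\chi+\beta\chi^{*}$. Applying the $*$-operation to these relations identifies the second multiplicative function with the transform $\chi^{*}$ of the first, and the constraint $\chi\circ\sigma^{2}=\chi$ when $\beta\neq 0$ appears because $(\chi^{*})^{*}=\chi\circ\sigma^{2}$, so that $*$ closes up on $\mathrm{span}\{\chi,\chi^{*}\}$ exactly when $\chi\circ\sigma^{2}=\chi$. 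This is alternative (2).

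Now suppose $g=\chi$ is multiplicative. Lemma~\ref{Lemma}(6) provides $h$ with $f(xy)=f(x)\chi(y)+\chi(x)h(y)$. On the subsemigroup $S\backslash I_{\chi}$, where $\chi$ does not vanish, dividing by $\chi(xy)=\chi(x)\chi(y)$ turns this into an additive Cauchy--Pexider equation for $f/\chi$, giving $f=\chi(c+A)$ with $A$ additive; feeding this back through Lemma~\ref{Lemma}(2) and the $*$-symmetry produces $\chi^{*}=\chi$, $A\circ\sigma=-A$ and $\rho^{*}=-\rho$. The delicate part is the behaviour of $f$ on the ideal $I_{\chi}$. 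Here I would invoke the fine structure theorems for the sine addition law on general semigroups \cite{EB1,EB2}, which split $I_{\chi}$ into $I_{\chi}\backslash P_{\chi}$, on which $f$ must vanish, and $P_{\chi}$, on which $f$ is an essentially free function $\rho$; I would then use Lemma~\ref{le}, the $\sigma$-invariance of $P_{\chi}$ and of $I_{\chi}\backslash P_{\chi}$, to check that these pieces are compatible with the $*$-operation, which yields the multiplicative transport rules (I) and the annihilation rules (II). This gives alternative (3).

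Abelianness of $f$ and $g$ off case (1) is then immediate from Lemma~\ref{Lemma}(1). For the continuity addendum I would use that in a topological semigroup left and right translations are continuous, so that $y\mapsto f(ay)$ and $y\mapsto f(ya)$ lie in $C(S)$; evaluating $f$ on such translates and applying the standard fact that a continuous linear combination of finitely many distinct multiplicative functions (with nonzero coefficients) has each multiplicative summand continuous recovers $\chi$ and $\chi^{*}$, hence $g$, as continuous functions, and then $A\in C(S\backslash I_{\chi})$ and $\rho\in C(P_{\chi})$ as quotients and restrictions of continuous functions. The main obstacle I anticipate lies entirely in alternative (3): since $S$ is neither regular nor generated by its squares, $f$ is \emph{not} determined by its values off $I_{\chi}$, and coordinating the automorphism $\sigma$ with the ideal-theoretic sets $I_{\chi}$ and $P_{\chi}$ so as to pin down $\rho$ and verify conditions (I)--(II) is the real work; by contrast the non-multiplicative case reduces cleanly to the already-understood equation \eqref{moneq}.
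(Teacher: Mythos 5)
Your skeleton---dispose of $f=0$, pass to the sine addition law \eqref{moneq}, invoke Ebanks' classification, then impose the $*$-symmetry---is indeed the paper's skeleton, and your derivation of case (2) is essentially sound. But there are two genuine gaps, located exactly at the places you mark with ``should'' and ``would invoke''.

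First, the exclusion of linear independence of $f^e$ and $g$ is the technical heart of the paper's proof, and your one-line argument for it is both unproven and wrong in mechanism. You claim that independence plus the representation $g(xy)=f(x)h_1(y)+g(x)h_2(y)$ ``should force $g$ to be multiplicative''. In the paper, with $g=\frac{\chi_1+\chi_2}{2}$, $\chi_1\neq\chi_2$, the analysis splits: when $h_1\neq 0$, one gets that $f$ is a linear combination of $\chi_1,\chi_2$, hence central, and the contradiction comes from Lemma \ref{Lemma}(4) (centrality of $f$ forces dependence), not from multiplicativity of $g$; when $h_1=0$, one gets $g(xy)=bg(x)g(y)$, and only the branch $b=1$ ends with ``$g$ multiplicative''---the branch $b=2$ requires a separate computation (equations \eqref{rej1}--\eqref{rej5}) which again concludes that $f$ is central and appeals to Lemma \ref{Lemma}(4). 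Worse, your exclusion argument is by design unavailable when $g$ \emph{is} multiplicative, so in your second case independence is never ruled out at all; the paper needs a genuinely separate argument there (its Subcase B.2, via Lemma \ref{Lemma}(6) and centrality of $h$), and without it the dependence $f^e=cg$ that underlies the structure of case (3) is not available.

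Second, in the multiplicative case your route cannot reach the structure of $f$ on $I_{\chi}$, which is where all the content of case (3) lies (vanishing on $I_{\chi}\backslash P_{\chi}$, the function $\rho$, $\rho^*=-\rho$, conditions (I) and (II)). The fine-structure theorems of Ebanks describe solutions of \eqref{moneq}, but $f$ itself is not such a solution. The pairs at your disposal that do satisfy \eqref{moneq} are $(f_a,g)$ from Lemma \ref{Lemma}(1), the pair $(h,\chi)$ extracted from Lemma \ref{Lemma}(6) by associativity, and---only after dependence $f^e=cg$ is established---the pair $(f^{\circ},g)$. The first two do not determine $f$ on $I_{\chi}$: they control $f(ax)$ and values on products, and on a general semigroup $S^2\neq S$ (the paper's closing Remark in Section 3 exploits precisely this freedom to build nonzero solutions supported off $S^2$). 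The paper's mechanism is to prove $f^e=cg$ first, apply Ebanks' theorem to $f^{\circ}$, and then note that $f=f^{\circ}+c\chi=f^{\circ}$ on $I_{\chi}$ because $\chi$ vanishes there; this is what delivers the description of $f$ on $I_{\chi}\backslash P_{\chi}$ and $P_{\chi}$. A smaller repair in the same case: on a semigroup, $F(xy)=F(x)+H(y)$ for $F=f/\chi$ yields only that $H$ is additive and $F-H$ is right-invariant ($G(xy)=G(x)$); you need centrality of $f$ (available from Lemma \ref{Lemma}(1) and (4), i.e., again from dependence) to conclude that $F-H$ is a constant $c$. So as written, your multiplicative case establishes the form of $f$ only on $S\backslash I_{\chi}$, and case (3) of Theorem \ref{Th} remains unproven on $I_{\chi}$.
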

\begin{proof}
We check by elementary computations that if $f$, $g$ are of the forms (1)--(3) then $(f,g)$ is a solution of \eqref{A2}, so left is that any solution $(f,g)$ of \eqref{A2} fits into (1)--(3).\newline
Let $f, g :S \rightarrow \mathbb{C}$ be a solution of \eqref{A2}. If $f=0$ then $g$ is arbitrary. From now on we assume that $f\neq 0$. According to Lemma \ref{Lemma} (2) we have 
\begin{equation}
f^{\circ}(xy)=f(x)g(y)-f^*(y)g(x),\quad\text{for all}\quad x,y\in S.
\label{par}
\end{equation}
Since $f=f^e+f^{\circ}$ and $f^*=f^e-f^{\circ}$, equation \eqref{par} can be written as follows 
\begin{equation}
f^{\circ}(xy)=f^{\circ}(x)g(y)+f^{\circ}(y)g(x)+f^e(x)g(y)-f^e(y)g(x).
\label{B7}
\end{equation}
\underline{First case :}  $f^e$ and $g$ are linearly dependent. There exists a constant $c\in \mathbb{C}$ such that $f^e=cg$, then for all $x,y\in S$ we have 
\[f^e(x)g(y)-f^e(y)g(x)=cg(x)g(y)-cg(y)g(x)=0.\]
Now equation\eqref{B7} becomes
\begin{equation}
f^{\circ}(xy)=f^{\circ}(x)g(y)+f^{\circ}(y)g(x),\quad\text{for all}\quad x,y\in S.
\label{B8}
\end{equation}
Equation \eqref{B8} means that the pair $(f^{\circ},g)$ satisfies the sine addition law, then according to \cite[Theorem 3.1]{EB2} and taking into account that $g\neq 0$ we have the following possibilities:\\
(i) $f^{\circ}=c_1\left( \chi _1 -\chi _2\right) $ and $g=\dfrac{\chi _1+\chi _2}{2}$, for some constant $c_1\in \mathbb{C}$ and $\chi_1, \chi _2 : S\rightarrow \mathbb{C}$ are two  multiplicative functions such that $\chi _1 \neq \chi _2$. So since $f^e=cg$, we deduce that $f=\alpha \chi _1+\beta \chi _2$ for some constants $\alpha ,\beta \in \mathbb{C}$. Substituting $f$ and $g$ in the functional equation \eqref{A2} we get after some simplification that for all $x,y\in S$
\[\alpha \chi _1 (x)\left[ \chi_1 ^*(y)-\chi _2 (y)\right]+\beta \chi _2(x)\left[\chi _2^*(y)-\chi_1(y) \right] =0.\]
Since $\chi_1\neq \chi_2$ then according to \cite[Theorem 3.18]{ST} we get that 
$$
\left\{\begin{array}{l}
\alpha \chi _1 (x)\left[ \chi_1 ^*(y)-\chi _2 (y)\right]=0 \\
\beta \chi _2(x)\left[\chi _2^*(y)-\chi_1(y) \right] =0 
\end{array},\right.
$$
for all $x,y\in S$. Since $f\neq 0$ then at least one of $\alpha $ and $\beta $ is not zero.\newline
If $\alpha \neq0$ and $\beta =0$ then we deduce that $\chi _1 \neq 0$, $\chi_2\neq 0$ and $\chi_1^*(y)=\chi_2(y)$ for all $y\in S$. The result occurs in part (2) with $\beta =0$ and $\chi_1=\chi$.\newline
If $\alpha \neq0$ and $\beta \neq 0$ then if $\chi _1 = 0$ then $\chi_2\neq 0$, and then $\chi_1=\chi_2^* \neq 0$, this is a contradiction, so $\chi_1 \neq 0$ and  $\chi_1^*(y)=\chi_2(y)$, then  $\chi_2^*(y)=\chi_1(y)$ for all $y\in S$. Now if we put $\chi =\chi_1$ we have $\chi^*=\chi_2$ and $\chi \circ \sigma^2=\chi$.\newline
If $\alpha =0$ and $\beta \neq 0$ then $\chi _1 \neq 0$, $\chi_2\neq 0$,  $\chi_2^*(y)=\chi_1(y)$ for all $y\in S$.  This occurs in part (2) with $\alpha =0$, $\chi_1=\chi$ and $\chi_2=\chi^*$, so we have $\chi \circ \sigma^2=\chi$.\\
(ii) $$f^{\circ}=\left\{ \begin{matrix}
   \chi A & on & S\backslash {{I}_{\chi }}  \\
   0 & on & {{I}_{\chi }}\backslash {{P}_{\chi }}  \\
   \rho  & on & {{P}_{\chi }}  \\
\end{matrix} \right.\quad \text{and}\quad g=\chi,$$ 
where  $\chi: S \rightarrow \mathbb{C}$ is a non-zero multiplicative function and $A: S \backslash I_{\chi} \rightarrow \mathbb{C}$ an additive function, $\rho: P_{\chi} \rightarrow \mathbb{C}$ is a function satisfying the condition (I), and $f^{\circ}$ satisfies the condition (II). Since $f^e=c\chi=0$ on $I_{\chi}$, then $f=f^{\circ}$ on $I_{\chi}$, so $f$ satisfies the condition (II). Since $f=f^{\circ}+f^e=f^{\circ}+cg$, we obtain
$$f=\left\{ \begin{matrix}
   \chi (c+A) & on & S\backslash {{I}_{\chi }}  \\
   0 & on & {{I}_{\chi }}\backslash {{P}_{\chi }}  \\
   \rho  & on & {{P}_{\chi }}  \\
\end{matrix} \right..$$  
By applying the identity \eqref{par} to the pair $(\sigma(y),x)$ and multiplying the identity obtained by $\mu (y)$, we get 
\begin{equation}
\mu(y)f^{\circ}(\sigma(y)x)=f^*(y)g(x)-f^*(x)g^*(y).
\label{par1}
\end{equation}
By adding \eqref{par1} to \eqref{par}, we get that 
\begin{equation}
f^{\circ}(xy)+\mu(y)f^{\circ}(\sigma(y)x)=f(x)g(y)-f^*(x)g^*(y).
\label{par2}
\end{equation}
Now by subtracting \eqref{par2} from \eqref{A2} we get
$$f^e(xy)+\mu(y)f^e(\sigma(y)x)=f(x)g(y)+f^*(x)g^*(y).$$
Since $f^e=c\chi$, we deduce that 
$$c\chi(x)\chi(y)+c\chi(x)\chi^*(y)=f(x)\chi(y)+f^*(x)\chi^*(y).$$
This implies that 
\begin{equation}
\chi(y)\left[c\chi(x)-f(x) \right]+\chi^*(y)\left[c\chi(x)-f^*(x) \right].
\label{par3}  
\end{equation}
If $\chi \neq \chi^*$, then since $\chi$ and $\chi^*$ are non-zero, we get from \eqref{par3} that 
$$f(x)=c\chi(x)\quad \text{and}\quad f^*(x)=c\chi(x),$$
for all $x\in S$. Since $f\neq 0$, then $c\neq 0$ and $f^*=c\chi^*=c\chi$, and then $\chi=\chi^*$. This is a contradiction. So $\chi=\chi^*$ and the functional equation \eqref{A2} implies that 
\[\chi (xy)\left(c+A(xy) \right)+\mu (y)\chi (\sigma (y)x)\left(c+A(\sigma (y)x) \right)=2\chi (y)\chi (x)\left(c+A(x) \right),   \]
for all $x,y\in S\backslash I_{\chi}$. Since $A$ is additive and $\chi (xy)\neq 0$, the identity above reduces to $A\circ \sigma =-A$. On the other hand, $f\neq 0$ implies that at least one of  $c, A$ and $\rho$ is not zero. For $x\in S\backslash I_{\chi}$ and $y\in P_{\chi}$ we have $xy\in P_{\chi}$ and  by Lemma 3.2 (1) we get $\sigma(y)\in P_{\chi}$, so $\sigma (y)x\in P_{\chi}$, then Equation \eqref{A2} can be written as follows
\[\rho (y)\chi (x)+\rho^*(y)\chi (x)=0,\]
 which implies that $\rho=-\rho^*$ since $\chi$ is non-zero. Now if $y\in I_{\chi}\backslash P_{\chi}$, then by Lemma 3.2 (2) we get $\sigma(y)\in I_{\chi}\backslash P_{\chi}$, and we get by the condition (II) that $f(xy)=f(\sigma (y)x)=0$, so
 \[f(xy)+\mu (y)f(\sigma (y)x)=0=2f(x)\chi(y),\]
since $\chi(y)=0$. This is part (3) of Theorem \ref{Th}.\newline
\underline{Second case :}  $f^e$ and $g$ are linearly independent. According to Lemma \ref{Lemma} (5) there exists two functions $h_1, h_2 :S \rightarrow \mathbb{C}$ such that 
\begin{equation}
g(xy)=f(x)h_1(y)+g(x)h_2(y) \quad \text{for all}\quad x,y \in S.
\label{B10}
\end{equation} 
According to Lemma \ref{Lemma} (2),  $(f_a,g)$ satisfies the sine addition law (\ref{moneq}) then we have the following possibilities :\newline
\underline{Subcase A :}  $f_a=0$ for all $a\in S$. That is $f(xy)=f(x)g(y)$ for all $x,y\in S$, according to the proof of \cite[Theorem 4.2 (case 1)]{Ajb}, this case leads to $f=\lambda \chi$ and $g=\chi$ where $\chi : S\rightarrow \mathbb{C}$ is non-zero multiplicative function and $\lambda \in \mathbb{C}$ a constant. We deduce that $f$ is central, then since $g\neq 0$ according to Lemma \ref{Lemma} (4) $f^e$ and $g$ are linearly dependent. This case does not occur.\newline
\underline{Subcase B :}  $f_a \neq 0$ for some $a\in S$. There exists two  multiplicative functions $\chi_ 1 ,\chi _2 :S\rightarrow \mathbb{C}$ such that $g=\dfrac{\chi _1+\chi _2}{2}$.\newline
\underline{Subcase B.1 :}  $\chi _1 \neq \chi _2$. \newline
\underline{Subcase B.1.1 :}  $h_1 =0$. The identity \eqref{B10} becomes 
\begin{equation}
g(xy)=g(x)h_2(y),\quad\text{for all}\quad x,y\in S.
\label{B11}
\end{equation}
Since $g$ is central and $g\neq 0$, we deduce from \eqref{B11} that $h_2=bg$ for some constant $b\in \mathbb{C}$, so \eqref{B11} can be written as follows 
\begin{equation}
g(xy)=bg(x)g(y),\quad\text{for all}\quad x,y\in S.
\label{B12}
\end{equation}
Since $g=\dfrac{\chi _1+\chi _2}{2}$ then we deduce from the identity \eqref{B12} that for all $x,y\in S$
\begin{equation}
(2-b)\left( \chi_1(xy)+\chi_2(xy)\right)=b\left( \chi_1(x)\chi_2(y)+\chi_1(y)\chi_2(x)\right). 
\label{B13} 
\end{equation}
Since $g\neq 0$ then there exists $y_0\in S$ such that $\chi_2(y_0)\neq 0$, so if we put $y=y_0$ in the identity \eqref{B13} we obtain that for all $x\in S$
\begin{equation}
\left[(2-b)\chi_1(y_0)-b\chi_2(y_0) \right] \chi_1(x)+\left[(2-b)\chi_2(y_0)-b\chi_1(y_0) \right] \chi_2(x)=0.
\label{B14}
\end{equation}
Since $\chi_1\neq \chi_2$ then by using \cite[Theorem 3.18]{ST} we deduce from \eqref{B14} that 
\begin{equation}
\left[ (2-b)\chi_1(y_0)-b\chi_2(y_0)\right] \chi_1=0, 
\label{K1}
\end{equation}
\begin{equation}
\left[ (2-b)\chi_2(y_0)-b\chi_1(y_0)\right] \chi_2=0.
\label{K2}
\end{equation}

If $b=0$ then we get from \eqref{K2} that $\chi_2=0$, this is a contradiction. So $b\neq 0$. If $b=2$ then we get from \eqref{K1} that $\chi_1=0$, that is $g=\dfrac{\chi_2}{2}$. Then Equation \eqref{B5} implies that  for all $x,y,z\in S$ 
\begin{equation}
\chi_2 (z)\left[f(xy)-\chi_2(y)f(x) \right] =\chi_2(x)\left[\chi_2(y)f^*(z)-f^*(yz) \right].
\label{rej1} 
\end{equation}
Since $\chi_2\neq 0$, there exists $z_0\in S$ such that $\chi_2(z_0)\neq 0$. By putting $z=z_0$ in \eqref{rej1}, we get that 
\begin{equation}
f(xy)=\beta \chi_2(xy)+\chi_2(x)k(y)+\chi_2(y)f(x),\  \text{for all}\quad x,y\in S,
\label{rej2}
\end{equation}
where $k(y)=\dfrac{-f^*(yz_0)}{\chi_2(z_0)}$ and $\beta =\dfrac{f^*(z_0)}{\chi_2(z_0)}\in \mathbb{C}$ is a constant. By applying the identity \eqref{rej2} to the pair $(\sigma (y),x)$ and multiplying the identity obtained by $\mu (y)$, we get 
\begin{equation}
\mu (y)f(\sigma (y)x)=\beta \chi_2^*(y)\chi_2(x)+\chi_2^*(y)k(x)+\chi_2(x)f^*(y),\quad x,y\in S.
\label{rej3}
\end{equation}
By adding \eqref{rej2} to \eqref{rej3} and taking into account that the pair $(f,g)$ satisfies \eqref{A2}, we get after some rearrangement 
\begin{equation}
\chi_2(x)\left[\beta (\chi_2(y)+\chi_2^*(y))+f^*(y)+k(y) \right] =-\chi_2^*(y)k(x).
\label{rej4}
\end{equation}
Since $\chi_2\neq 0$, then we deduce from \eqref{rej4} that $f^*+k=a_1\chi_2+a_2\chi_2^*$ for some constants $a_1,a_2\in \mathbb{C}$. Taking this into account in \eqref{rej4}, we get that 
\begin{equation}
\chi_2(x)\left[b_1\chi_2(y)+b_2\chi_2^*(y) \right] =-\chi_2^*(y)\left[a_1\chi_2(x)+a_2\chi_2^*(x)-f^*(x) \right],
\label{rej5}
\end{equation}
for some contsants $b_1,b_2\in \mathbb{C}$. Since $\chi_2\neq 0$, then we deduce from \eqref{rej5} that $f^*=c_1\chi_2+c_2\chi_2^*$, where $c_1,c_2\in \mathbb{C}$ are contsants. This implies that $f^*$ is central, and then $f$ is central since $\sigma$ is an automorphism, so according to Lemma 3.1 (4) $f^e$ and $g$ are linearly dependent. This case does not occur. So $b\neq 2$ and $\chi_1 \neq 0$, then we get from equation \eqref{K2} since $\chi_2\neq 0$ that $\chi_1(y_0)=\dfrac{2-b}{b}\chi_2(y_0)\neq 0$. Taking this into account in \eqref{K1}, we deduce that 
\[\left[\dfrac{(2-b)^2}{b} -b\right] \chi_2(y_0)\chi_1=0.\]
Since $\chi_2(y_0)\neq 0$ and $\chi_1\neq 0$, this implies that $(2-b)^2-b^2=0$ then $b=1$, and then we deduce from \eqref{B12} that $g$ is a multiplicative function, this means that $\chi_1 =\chi_2$ but this is a contradiction. This case does not occur.\newline
\underline{Subcase B.1.2 :} $h_1 \neq0$. There exists $y_0 \in S$ such that $h_1(y_0)\neq 0$, then we deduce from the identity \eqref{B10} that $f(x)=c_1g(xy_0)+c_2g(x)$ for all $x\in S$ and some constants $c_1, c_2 \in \mathbb{C}$. Since $g=\dfrac{\chi_1+\chi_2}{2}$ where $\chi_1, \chi_2$ are multiplicative functions, we deduce that $f=a_1\chi_1+a_2\chi_2$ for some constants  $a_1, a_2 \in \mathbb{C}$. This implies that $f$ is central then according to Lemma \ref{Lemma} (4) $f^e$ and $g$ are linearly dependent. This case does not occur.\newline
\underline{Subcase B.2 :}  $\chi _1 =\chi _2$. In this case $g$ is a multiplicative function, then according to Lemma \ref{Lemma} (5) there exists a function $h$ such that 
\begin{equation}
f(xy)=f(x)g(y)+g(x)h(y),\quad\text{for all}\quad x,y\in S.
\label{B15}
\end{equation}
\underline{Subcase B.2.1 : } $h=0$. From \eqref{B15} we deduce that $f(xy)=f(x)g(y)$ for all $x,y\in S$. This implies that $f_a=0$, and this is a contradiction.\newline
\underline{Subcase B.2.2 : } $h\neq 0$. If we put $x=a$ in \eqref{B15} we get $f_a(y)=g(a)h(y)$ for all $y\in S$, since $f_a \neq 0$ then $g(a)\neq 0$, then $h=\dfrac{1}{g(a)}f_a$, this implies that $h$ is central since $f_a$ is central. On the other hand if we apply the identity \eqref{B15} to the pair $(\sigma (y),x)$ and multiply the identity obtained by $\mu (y)$ we get 
\begin{equation}
\mu (y)f(\sigma (y)x)=f^*(y)g(x)+g^*(y)h(x),\quad\text{for all}\quad x,y\in S.
\label{B16}
\end{equation}
By adding \eqref{B15} and \eqref{B16} and taking into account that $(f,g)$ satisfies the functional equation \eqref{A2} we get 
\begin{equation}
f(x)g(y)=g(x)\left[h(y)+f^*(y) \right]+g^*(y)h(x),\quad\text{for all}\quad x,y\in S. 
\label{B17}
\end{equation}
Since $g\neq 0$ there exists $y_0 \in S$ such that $g(y_0)\neq 0$, then we deduce from \eqref{B17} that 
\begin{equation}
f(x)=b_1g(x)+b_2h(x),\quad\text{for all}\quad x\in S,
\label{B18}
\end{equation}
for some constants $b_1,b_2 \in \mathbb{C}$. Since $g$ and $h$ are central, we deduce from \eqref{B18} that $f$ is also central. Then according to Lemma \ref{Lemma} (4) the functions $f^e$ and $g$ are linearly dependent. This case does not occur.\par
For the topological statements suppose that $f$ is continuous and $f\neq 0$. The continuity of $g$ follows easily from the continuity of $f$ and the functional equation \eqref{A2}. Let $y_0\in S$ such that $f(y_0)\neq 0$, we get from \eqref{A2} that
\[g(x)=\dfrac{f(xy_0)+\mu (y_0)f(\sigma(y_0)x)}{2f(y_0)}\quad\text{for}\ x\in S.\]
The functions $x\mapsto f(xy_0)$ and $x\mapsto f(\sigma(y_0)x)$ are continuous, since $S$ is a topological semigroup so that the right translation $x\mapsto xy_0$ and the left translation $x\mapsto \sigma(y_0)x$ are continuous. Then $g$ is continuous.\\
In case (2) we get the continuity of $\chi$ and $\chi^*$ from the continuity of $g$ by the help of \cite[Theorem 3.18]{ST}. For the case (3) the function $\rho$ is continuous by restriction since $f$ is continuous, and 
\[\chi A=f-c\chi\quad\text{on}\quad S\backslash I_{\chi}.\]
$g=\chi$ is continuous. So $A$ is continuous since $\chi\neq 0$. This completes the proof of Theorem \ref{Th}.
\end{proof} 
\begin{rem}
 For a semigroup $S$ such that $S^2\neq S$, there exists a non-zero function $f$ such  that\newline
 \begin{equation}
 f(xy)+\mu (y)f(\sigma (y)x)=0,\quad\text{for all}\quad x,y\in S.
 \label{W0}
 \end{equation}
 Let $S=\left\lbrace 0,1 \right\rbrace $ and define the semigroup operation as $xy=0$, for all $x,y\in S$, and $\sigma (x)=x$ for all $x\in S$, we let $f$ be  the function
 \[f(x)=\left\{\begin{array}{l}
0\quad\text{if}\  x=0\\
1\quad\text{if}\  x=1
\end{array},\right.\] 
 it is easy to verify that $f$ satisfies \eqref{W0}. 
 \end{rem}
\section{ Examples}
In this section we give some examples of the non-zero continuous  solutions of the functional equation \eqref{A2} with $\mu =1$.
\begin{ex}
Let $G$ be the $(ax+b)$--group defined by \newline
\[G:=\left\lbrace \left(\begin{matrix}
   a & b  \\
   0 & 1  \\
\end{matrix} 
 \right)\mid a>0,\quad b\in \mathbb{R}  \right\rbrace .\]
 We consider the following automorphism on $G$ 
 \[\sigma \left(\begin{matrix}
   a & b  \\
   0 & 1  \\
\end{matrix} 
 \right)=\left(\begin{matrix}
   a & 2b  \\
   0 & 1  \\
\end{matrix} 
 \right),\]
 $\sigma$ is not involutive. According to \cite[Example 2.10, Example 3.13]{ST}, the continuous additive and the non-zero multiplicative functions on $G$ have respectively the forms
 \[A_c : \left(\begin{matrix}
   a & b  \\
   0 & 1  \\
\end{matrix} 
 \right)\mapsto c\log(a) ,\]
 and 
 \[\chi_{\lambda} : \left(\begin{matrix}
   a & b  \\
   0 & 1  \\
\end{matrix} 
 \right)\mapsto a^{\lambda},\]
 where $c, \lambda \in \mathbb{C}$. We can see that $\chi_{\lambda}\circ \sigma =\chi_{\lambda}$ and $A_c\circ \sigma =A_c$, so we deduce that the non-zero continuous solutions of \eqref{A2} are 
 $$
\left\{\begin{array}{l}
f : \left(\begin{matrix}
   a & b  \\
   0 & 1  \\
\end{matrix} 
 \right)\mapsto \alpha a^{\lambda}\\
g : \left(\begin{matrix}
   a & b  \\
   0 & 1  \\
\end{matrix} 
 \right)\mapsto a^{\lambda}
\end{array},\right.
$$
 where $\alpha \in \mathbb{C}\backslash \left\lbrace 0\right\rbrace $ and $\lambda \in \mathbb{C}$. 
\end{ex}
\begin{ex}
Let $S=(\mathbb{C},+)$  and  $\sigma (z)=2z$ for all $z\in \mathbb{C}$. The functional equation \eqref{A2} is written as follows 
\[f(z+z')+f(z+2z')=2f(z)g(z'),\quad z,z'\in S.\]
The continuous characters on $S$ are the functions of the form $\chi(z)=e^{a z}$, $z\in \mathbb{C}$,
where $a \in \mathbb{C}$. On the other hand according to \cite[Exersice 2.13]{ST} the continuous functions additive on $S$ are the functions : $A(z)=\lambda _1 z+\lambda_2 \overline{z}$,
  where $\lambda_1$ and $\lambda_2$ are complex constants, if $A\circ \sigma =-A$ then $A=0$, and $\chi =\chi\circ \sigma^2$ implies that $\chi=1$, so we deduce  that the continuous non-zero solutions of \eqref{A2} are
$$
\left\{\begin{array}{l}
f(z)=\alpha e^{a z}\\
g(z)=\dfrac{e^{a z}+e^{2a z}}{2}
\end{array},\right.
$$
where $\alpha \in \mathbb{C}\backslash \left\lbrace 0\right\rbrace $.
\end{ex}

\begin{ex}
Let $S=H_3$ be the Heisenberg group defined by
\[H_3=\left\lbrace \left( \begin{matrix}
   1 & x & z  \\
   0 & 1 & y  \\
   0 & 0 & 1  \\
\end{matrix} \right)\mid \quad x,y,z\in \mathbb{R} \right\rbrace . \]
We consider the following automorphism 
\[\sigma \left( \begin{matrix}
   1 & x & z  \\
   0 & 1 & y  \\
   0 & 0 & 1  \\
\end{matrix} \right)=\left( \begin{matrix}
   1 & x & 2z  \\
   0 & 1 & 2y  \\
   0 & 0 & 1  \\
\end{matrix} \right).\]
According to \cite[Example 2.11, Example 3.14]{ST}, the continuous  additive and the non-zero multiplicative functions on $S$ have respectively the forms
\[A \left( \begin{matrix}
   1 & x & z  \\
   0 & 1 & y  \\
   0 & 0 & 1  \\
\end{matrix} \right)=\alpha x+\beta y ,\]
and 
\[\chi \left( \begin{matrix}
   1 & x & z  \\
   0 & 1 & y  \\
   0 & 0 & 1  \\
\end{matrix} \right)=e^{ax+by},\]
where $\alpha ,\beta ,a ,b\in\mathbb{C}$. If $A\circ \sigma=-A$ then $A=0$. On the other hand $\chi \circ \sigma^2=\chi$ implies that $\chi =e^{ax}$. So the continuous non-zero solutions of equation \eqref{A2} are 
$$
\left\{\begin{array}{l}
f : \left(\begin{matrix}
   1 & x & z  \\
   0 & 1 & y  \\
   0 & 0 & 1  \\
\end{matrix} 
 \right)\mapsto \alpha e^{ax+by}\\
g : \left(\begin{matrix}
   1 & x & z  \\
   0 & 1 & y  \\
   0 & 0 & 1  \\
\end{matrix} 
 \right)\mapsto \dfrac{e^{ax+by}+e^{ax+2by}}{2} 
\end{array},\right.$$
where $\alpha\in \mathbb{C}\backslash \left\lbrace 0 \right\rbrace$. 
\end{ex}
\begin{ex}
Let $S=\left( \left]-1,1 \right[, . \right)$ and $\sigma (x)=x$ for all $x\in S$. $S$ is not generated by its squares and if $\chi$ is a continuous multiplicative function on $S$, then $\chi$ have one of the forms
\begin{equation}
\chi=1,\quad \chi (x)=
\left\{\begin{array}{l}
\lvert x\rvert^{\alpha},\quad x\neq 0 \\
0,\quad x= 0
\end{array}\right.
\quad \text{or}\quad \chi (x)=
\left\{\begin{array}{l}
\lvert x\rvert^{\alpha}sgn(x),\quad x\neq 0 \\
0,\quad x= 0,
\end{array}\right.
\label{ex1}
\end{equation} 
for some $\alpha \in \mathbb{C}$ such that $\mathcal{R}(\alpha )>0$, where $\mathcal{R}(\alpha )$ denote the real part of $\alpha$. The non-zero continuous solutions of \eqref{A2} are 
$$\left\{\begin{array}{l}
f(x)=c \chi (x) \\
g(x)=\chi (x)
\end{array},\right.$$
where $c\in \mathbb{C}\backslash \left\lbrace 0 \right\rbrace$ and $\chi$ have one of the forms in \eqref{ex1}.\\
\end{ex}
\subsection*{Declarations}

\textbf{Author contributions} This work is done  by the authors solely.\\
\\
\textbf{Funding} None.\\
\\
\textbf{Availability of data and materials} Not applicable.\\
\\
\textbf{Code Availability} Not Applicable.\\
\\
\textbf{Conflict of interest} None.


\begin{thebibliography}{1}
\bibitem{Acz} Aczél, J., Dhombres, J., \textit{Functional equations in several variables with applications to mathematics, Information theory and to the natural and social sciences. Encyclopedia of Mathematics and its Applications, vol. 31, Cambridge University Press, Cambridge (34B40) MR1004465 (90h:39001) (1989).}
\bibitem{Ajb} Ajebbar, O., Elqorachi, E., \textit{Variants of Wilson's functional equation on semigroups.
Commun. Korean Math, \textbf{35} 711--722 (2020).}
\bibitem{BE} Bouikhalene, B., Elqorachi, E., \textit{A class of functional equations of type d'Alembert on monoids. Anastassiou, G., Rassias, J.M. (eds.) Frontiers in Functional Equations and Analytic Inequalities, \textbf{pp} 219--235. Springer, Berlin (2019).}
\bibitem{EA}  Elqorachi, E., Redouani, A. , \textit{Solutions and stability of a variant of Wilson’s functional equation.
 Proyecciones(Antofagasta, On line), vol. 37, no. 2, \textbf{pp}. 317-344, Jun. (2018).}
\bibitem{EB1}  Ebanks, B., \textit{The sine addition and subtraction formulas on semigroups.
Acta Math. Hungar, \textbf{164}(2) 533--555 (2021).}
\bibitem{EB2}  Ebanks, B., \textit{Around the Sine Addition Law and
d’Alembert’s Equation on Semigroups.
Results Math \textbf{77}, 11 (2022).} https://doi.org/10.1007/s00025-021-01548-6
\bibitem{ES1} Ebanks, B., Stetkaer, H., \textit{On Wilson's functional equations. Aequat Math, no 2,  \textbf{89} 339--354 (2015).}
\bibitem{ES2} Ebanks, B., Stetkaer, H., \textit{d'Alembert's other functional equation on monoids with involution. Aequat Math, \textbf{89} 187--206 (2015).}
\bibitem{F1} Fadli, B., Zeglami, D., and Kabbaj, S., \textit{ A variant of Wilson’s functional equation. Publ Math. Debrecen \textbf{87}, no. 3-4, 415–427 (2015).}
\bibitem{test} Kannappan, Pl, \textit{Functional equations and inequalities with applications, Springer monographs in Mathematics, Springer, New York (2009)} https://doi.org/10.1007/987-0-387-89492-8
\bibitem{SA1} Sabour, K., \textit{Wilson’s functional equation with an endomorphism. Math-Recherche et Application \textbf{15} , 32-39 (2016).}
\bibitem{test} Sahoo, P.K, Kannappan, Pl, \textit{Introduction to functional equations. Chapman and Hall/ CRC Press, Boca Raton (2011).}
\bibitem{ST} Stetkaer, H., \textit{Functional equations on groups.
World scientific Publishing CO, Singapore (2013).}
\bibitem{ST2} Stetkaer, H., \textit{A variant of d'Alembert's functional equation. Aequat Math, no 3,  \textbf{89} 657--662 (2015).}
\bibitem{W1}  Wilson, W. H., \textit{On certain related functional equations.
Bull. Amer. Math. Soc, 26, no 7.(1919-20),  33--312. Fortschr : 47, 320(1919-20).}
\bibitem{W2}  Wilson, W. H., \textit{Two general functional equations.
Bull. Amer. Math. Soc, 31, no 7. 330--334 (1919-20).}
\end{thebibliography}
\end{document}